\newtheorem{theorem}{Theorem}[section]
\theoremstyle{definition}
\newtheorem{definition}[theorem]{Definition}
\numberwithin{equation}{section}
\newcommand{\blankbox}[5]
\begin{document}
\title{Weak and Strong Boundedness for $p$-Adic Fractional Hausdorff
Operator and Its Commutator}
\author{Naqash Sarfraz$^{1}$}
\author{Fer\'{\i}t G\"{u}rb\"{u}z $^{2,*}$}
\subjclass[2010]{42B35, 26D15, 46B25, 47G10}
\keywords{$p$-adic fractional Hausdorff Operator, Sharp weak bounds,
weighted $p$-adic Lorentz space, Commutators}

\begin{abstract}
In this paper, boundedness of Hausdorff operator on weak central Morrey
space is obtained. Furthermore, we investigate the weak bounds of $p$-adic
fractional Hausdorff Operator on weighted $p$-adic weak Lebesgue Space. We
also obtain the sufficient condition of commutators of $p$-adic fractional
Hausdorff Operator by taking symbol function from Lipschitz space. Moreover,
strong type estimates for fractional Hausdorff Operator and its commutator
on weighted $p$-adic Lorentz space are also acquired.
\end{abstract}

\maketitle

\footnote{$^{}$Department of Mathematics, Quaid-I-Azam University 45320,
Islamabad 44000, Pakistan\newline
$^{2}$Hakkari University, Faculty of Education, Department of Mathematics
Education, Hakkari 30000, Turkey\newline
$^*$Corresponding Author: feritgurbuz@hakkari.edu.tr(F. G\"{u}rb\"{u}z);
naqashawesome@gmail.com (N.Sarfraz)}

\section{\textbf{Introduction}}

Any non zero rational number $x$ can be written as $x=p^\gamma m/n,$ where $p
$ is fixed prime and $m,n$ are integers. It is mandatory that all these
numbers are coprime to each other. The $p$-adic absolute value of $x$ is as
follows: 
\begin{equation*}
\{|x|_{p}:x\in\mathbb{Q}_p\}=\{p^{-\gamma}:\gamma\in\mathbb{Z}\}\cup\{0\}.
\end{equation*}
In the above set, $\mathbb{Q}_p$ is the field of $p$-adic numbers and is the
completion of the field of rational number $\mathbb{Q}$ with respect to
non-Archimedean $p$-adic norm $|\cdot|_p.$ The $p$-adic absolute value $%
|\cdot|_p$ satisfies all the conditions of real norm together with so called
strong triangular inequality, 
\begin{equation}  \label{NE}
|x+y|_p\le\max\{|x|_p,|y|_p\}.
\end{equation}
Furthermore, if $|x|_{p}\neq|y|_{p},$ then (\ref{NE}) takes the form: 
\begin{equation}
|x+y|_p=\max\{|x|_p,|y|_p\}.
\end{equation}%
A $p$-adic number $x\in \mathbb{Q}_p\backslash\{0\}$ can also be represented
in canonical form (see \cite{VVZ}) as:%
\begin{equation}  \label{E3}
x=p^\gamma\sum_{k=0}^{\infty}\alpha_kp^k,
\end{equation}
where $\alpha_k,\gamma\in\mathbb{Z},\alpha_0\ne 0, \alpha_k\in\frac{\mathbb{Z%
}}{p \mathbb{Z}_{p}}.$ The above series converges in $p$-adic norm because
of the fact $|p^\gamma\alpha_{k}p^{k}|_{p}= p^{-\gamma-k}.$

The space $\mathbb{Q}_p^n=\mathbb{Q}_p\times...\times\mathbb{Q}_p$ consists
of points $\mathbf{x}=(x_{1},x_{2},...,x_{n}),$ where $x_k\in\mathbb{Q}%
_p,k=1,2,...,n.$ The $p$-adic norm can also be defined on higher dimensional
space $\mathbb{Q}_p^n$ as  
\begin{equation}  \label{ll}
|\mathbf{x}|_{p}=\max_{1\leq k \leq n}|x_{k}|_{p}.
\end{equation}
The norm in (\ref{ll}) is non-Archimedean one.

Let us represent 
\begin{equation*}
B_{\gamma}(\mathbf{a})=\{\mathbf{x} \in \mathbb{Q}_p^n:|\mathbf{x}-\mathbf{a}%
|_{p}\leq p^{\gamma}\},
\end{equation*}
the ball with center at $\mathbf{a} \in \mathbb{Q}_p^n$ and radius $%
p^{\gamma}$. In a same way, represent by 
\begin{equation*}
S_{\gamma}(\mathbf{a})=\{\mathbf{x} \in \mathbb{Q}_p^n:|\mathbf{x}-\mathbf{a}%
|_{p}=p^{\gamma}\},
\end{equation*}
the sphere with center at $\mathbf{a} \in \mathbb{Q}_p$ and radius $%
p^{\gamma}$. When $\mathbf{a}=\mathbf{0},$ we just represent $B_\gamma(%
\mathbf{0})=B_\gamma$ and $S_\gamma(\mathbf{0})=S_\gamma.$ Additionally, for
each $\mathbf{a}_0\in\mathbb{Q}_p^n, \ \mathbf{a}_0+B_\gamma=B_\gamma(%
\mathbf{a}_0)$ and $\mathbf{a}_0+S_\gamma=S_\gamma(\mathbf{a}_0)$.

The locally compact commutative group under addition of $\mathbb{Q}_p^n$
makes sure the existence of additive positive Haar measure $d\mathbf{x}$
invariants under shift $d(x+a)=dx, a\in\mathbb{Q}_p.$ The measure $dx$ is
unique if the following equality normalize it 
\begin{equation*}
\int_{B_{0}(\mathbf{0})}d\mathbf{x}=|B_{0}(\mathbf{0})|=1,
\end{equation*}
where $|B|$ denotes the Haar measure of a subset $B$ of $\mathbb{Q}_p^n,$
and is measurable. Also, an easy calculation shows $|B_{\gamma}(\mathbf{a}%
)|=p^{n\gamma}$, $|S_{\gamma}(\mathbf{a})|=p^{n\gamma}(1-p^{-n})$, for any $%
\mathbf{a}\in \mathbb{Q}_p^n$.

$p$-adic analysis is a key tool to describe Kohlrausch-Williams-Watts law,
the power decay law and the logarithmic decay law (see \cite{ABO}). $p$-adic
analysis is natural to non-Archimedean spaces (see \cite{{ABKO1},{ABKO}}).
Its applications in theoretical physics and theoretical biology can be found
in \cite{{ADFV},{DGSK},{PS},{VV}}. $p$-adic analysis has also cemented its
role in $p$-adic pseudo-differential equations and stochastic process, see
for example \cite{{K},{VVZ}}. Besides, in this day and age many researchers
have shown heaps of interest in the study of wavelet and harmonic analysis ,
for instance, (see \cite{{SH},{SR},{SVK}}).

The classical one dimensional Hausdorff operator is defined as: 
\begin{equation*}
h_{\Phi}f(\mathbf{x})=\int_{0}^{\infty}\frac{\Phi(t)}{t}f(\frac{x}{t}%
)dt,\quad x\in\mathbb{R},
\end{equation*}%
\newline
where $\Phi$ is integrable function on $\mathbb{R}^{+}=(0,\infty).$ Anderson
in \cite{A} defined the $n$- dimensional Hausdorff operator\newline
\begin{equation*}
H_{\Phi}f(\mathbf{x})=\int_{\mathbb{R}^{n}}\frac{\Phi(\mathbf{x}/|\mathbf{y}%
|)}{|\mathbf{y}|^{n}}f(\mathbf{y})d\mathbf{y},
\end{equation*}%
\newline
where $\Phi$ is function defined on ${\mathbb{R}}^{n}.$

A.K. Lerner and E. Liflyand\cite{LL} studied the most general matrix
Hausudorff operator which is given by: 
\begin{equation*}
(H_{\Phi,A}f)(\mathbf{x})=\int_{\mathbb{R}^{n}}\Phi(t)f(A(t)\mathbf{x})dt,
\end{equation*}
where $A(t)$ is the $n\times n$ invertible matrix almost everywhere in the
support of $\Phi.$

An extension of Hausdorff operator is the fractional Hausdorff operator
which was studied by Lin and Shan \cite{LS} is as follows: \newline
\begin{equation*}
H_{\Phi,\beta}(f)(\mathbf{x})=\int_{\mathbb{R}^{n}}\frac{\Phi(|\mathbf{x}|/|%
\mathbf{y}|)}{|\mathbf{y}|^{n-\beta}}f(\mathbf{y})d\mathbf{y},\quad
0\leq\beta<n.
\end{equation*}

Later, the weak and strong estimates of two kinds of multilinear fractional
Hausdorff operator on Lebesgue space were studied by Fan and Zhao, see \cite%
{FZ}. Gao and Zhao \cite{GZ} obtained the sharp weak bounds for Hausdorff
operators. For more details about weak bounds we refer some last
publications including \cite{GHZ,HJ}.

Inspiring from above results we define the $p$-adic fractional Hausdorff
operator 
\begin{equation*}
H_{\Phi,\beta}(f)(x)=\int_{\mathbb{Q}_p^n}\frac{\Phi(\mathbf{x}|\mathbf{y}%
|_{p})}{|\mathbf{y}|_{p}^{n-\beta}}f(\mathbf{y})d\mathbf{y},\quad
0\leq\beta<n.
\end{equation*}%
\newline
Here, we consider $|y|_{p}$ is equal to some power of $p\in\mathbb{Q}_p^n.$%
\newline
If $\beta=0,$ we get the Hausdorff operator which is defined by 
\begin{equation*}
H_{\Phi}(f)(x)=\int_{\mathbb{Q}_p^n}\frac{\Phi(\mathbf{x}|\mathbf{y}|_{p})}{|%
\mathbf{y}|_{p}^{n}}f(\mathbf{y})d\mathbf{y}
\end{equation*}
We also define the commutators $H^{b}_{\Phi,\beta}(f)=bH_{\Phi,\beta}(f)-H_{%
\Phi,\beta}(bf),$ where $b\in\Lambda_{\delta}(\mathbb{Q}_p^n),$ for some $%
0<\delta\leq1.$ The aim of this paper is to study the boundedness of
Hausdorff operator $H_{\Phi}$ on $p$-adic weak central Morrey space.
Moreover, we get the weak bounds of $H_{\Phi,\beta}$ and $H_{\Phi,\beta}^{b}$
from $L^{r,\infty}(|\mathbf{x}|_{p}^{\gamma},\mathbb{Q}_{p}^{n})$ to $L^{q}(|%
\mathbf{x}|_{p}^{\alpha},\mathbb{Q}_{p}^{n}).$ It is worthwhile to mention
here that the symbol function $b\in\Lambda_{\delta}(\mathbb{Q}_p^n).$ In
addition, strong type estimates of weighted $p$-adic Lorentz space in both
cases are also acquired. Throughout this article the letter $C$ denotes a
constant independent from essential values. Also the $A\preceq B$ denotes
that there exists a constant $C$ such that $A\leq CB.$

\section{Preliminaries}

Let $w(\mathbf{x})$ be a weight function on $\mathbb{Q}_p^n$ which is
nonnegative and locally integrable function. Let $L^q(w,\mathbb{Q}%
_p^n),(0<q<\infty)$ be the space of all complex-valued functions $f$ on $%
\mathbb{Q}_p^n$ such that

\begin{equation*}
\|f\|_{L^{q}(w,\mathbb{Q}_{p}^{n})}=\bigg(\int_{\mathbb{Q}_{p}^{n}}|f(%
\mathbf{x})|^{p}w(\mathbf{x})d\mathbf{x}\bigg)^{1/q}.
\end{equation*}

Now, we define the weighted $p$-adic weak Lebesgue space, as a measurable
function $f$ belongs to $L^{q,\infty}(w,\mathbb{Q}_{p}^{n})$ if\newline
\begin{equation*}
\|f\|_{L^{q,\infty}(w,\mathbb{Q}_{p}^{n})}=\sup_{\lambda>0}\lambda w\bigg(\{%
\mathbf{x}\in\mathbb{Q}_{p}^{n}:|f(\mathbf{x})|>\lambda\}\bigg)^{1/q}<\infty,
\end{equation*}%
\newline
where\newline
\begin{equation*}
w(\{\mathbf{x}\in\mathbb{Q}_{p}^{n}:|f(\mathbf{x})|>\lambda\})=\int_{\{%
\mathbf{x}\in\mathbb{Q}_{p}^{n}:|f(\mathbf{x})|>\lambda\}}w(\mathbf{x})d%
\mathbf{x}.
\end{equation*}%
\newline
Further, when $b\in\Lambda_{\delta}(\mathbb{Q}_p^n),$ for $0<\delta<1,$ the
homogeneous Lipschitz space is defined as follows: 
\begin{equation*}
\|b\|_{\Lambda_{\delta}(\mathbb{Q}_p^n)}= \sup_{\mathbf{x}, \mathbf{h}\in%
\mathbb{Q}_p^n, \mathbf{h}\neq 0}\frac{|b(\mathbf{x}+\mathbf{h})-b(\mathbf{x}%
)|}{|\mathbf{h}|_{p}^{\delta}}<\infty.
\end{equation*}
The distribution function of $f\in\mathbb{Q}_{p}^{n}$ with a measure $w(%
\mathbf{x})d\mathbf{x}$ is defined as: 
\begin{eqnarray*}
\begin{aligned}\mu_{f}(\lambda)=w\{\mathbf{x}\in\mathbb{Q}_{p}^{n}:|f(%
\mathbf{x})|>\lambda\} \end{aligned}
\end{eqnarray*}
The decreasing rearrangement of $f$ with respect to measure $w(\mathbf{x})d%
\mathbf{x}$ is as follows 
\begin{eqnarray*}
\begin{aligned}f^{*}(t)=\inf\{\lambda>0:\mu_{f}(\lambda)\leq t\},
t\in\mathbb{R}_{+}. \end{aligned}
\end{eqnarray*}
Next, we define the weighted $p$-adic Lorentz Space $L^{q,s}(w,\mathbb{Q}%
_{p}^{n})$ which is the collection of all functions $f$ such that $%
\|f\|_{L^{q,s}(w,\mathbb{Q}_{p}^{n})}<\infty,$ where 
\begin{eqnarray*}
\begin{aligned}\|f\|_{L^{q,s}(w,\mathbb{Q}_{p}^{n})}&={\begin{cases}\bigg(%
\frac{s}{q}\int_{0}^{\infty}[t^{1/q}f^{*}(t)]^{s}\frac{dt}{t}\bigg) , &
\text{if } 1\leq s<\infty,\\ \sup_{t>0}t^{1/q}f^{*}(t), & \text{if }
s=\infty.\end{cases}}\\ \end{aligned}
\end{eqnarray*}
It is easy to see that 
\begin{equation}  \label{pa}
\|f\|_{L^{r,s}(w,\mathbb{Q}_{p}^{n})}\leq C\|f\|_{L^{q,s}(w,\mathbb{Q}%
_{p}^{n})}, f\in L^{q,s}(w,\mathbb{Q}_{p}^{n}), 0<q<\infty, 0<s\leq r<\infty.
\end{equation}
If an operator $T$ is bounded from $L^{q,1}(w,\mathbb{Q}_{p}^{n})$ into $%
L^{r,\infty}(w,\mathbb{Q}_{p}^{n}),$ then $T$ is of weak type $(q,r).$ From (%
\ref{pa}), it is obvious to see that 
\begin{equation*}
w\{\mathbf{x}\in\mathbb{Q}_{p}^{n}:|Tf(\mathbf{x})|>\lambda\}\leq
C\lambda^{-r}\|f\|^{r}_{L^{q}(w,\mathbb{Q}_{p}^{n})}, 1\leq q\leq r<\infty,
\end{equation*}
justifies weak type $(q,r)$ for $T$. We will also use the following
Marcinkiewicz Theorem for $w(\mathbf{x})=|\mathbf{x}|_{p}^{\alpha},
\alpha>-n.$

\begin{theorem}
\label{T^} Suppose $1\leq q^{\prime }<q_{0},$ $1\leq r^{\prime },r_{0},
r^{\prime }\neq r_{0},$ $\vartheta\in(0,1)$ and 
\begin{equation*}
\frac{1}{q}=(1-\vartheta)/q^{\prime }+\vartheta/q_{0}, \frac{1}{r}%
=(1-\vartheta)/r^{\prime }+\vartheta/r_{0} .
\end{equation*}%
If $T$ is of weak type $(q^{\prime },r^{\prime })$ and $(q_{0},r_{0}),$ then 
$T$ is bounded from $L^{q,s}(|\mathbf{x}|_{p}^{\alpha},\mathbb{Q}_{p}^{n})$
into $L^{r,s}(|\mathbf{x}|_{p}^{\alpha},\mathbb{Q}_{p}^{n}),$ for all $1\leq
s<\infty.$
\end{theorem}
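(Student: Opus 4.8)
The plan is to read Theorem~\ref{T^} as the off-diagonal Marcinkiewicz interpolation theorem for Lorentz spaces over the measure space $(\mathbb{Q}_p^n,\mu)$ with $d\mu(\mathbf{x})=|\mathbf{x}|_p^\alpha\,d\mathbf{x}$. The hypothesis $\alpha>-n$ is used only to guarantee that $\mu$ is $\sigma$-finite (indeed $\mu(B_\gamma)<\infty$ for every $\gamma\in\mathbb{Z}$), so that the distribution function $\mu_f$, the rearrangement $f^*$, and the equivalence $\|f\|_{L^{q,s}(\mu)}\asymp\big(\int_0^\infty(t^{1/q}f^*(t))^s\,\tfrac{dt}{t}\big)^{1/s}$ (with the obvious modification when $s=\infty$) are all at our disposal. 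After this reduction the argument is the classical one, performed entirely with $f^*$ and $(Tf)^*$, and it requires $T$ to be sublinear, which we assume, as is implicit in the word ``operator'' here.

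First I would record the two ingredients used throughout. (i) If $g=g_1+g_2$ then $\mu_g(\lambda_1+\lambda_2)\le\mu_{g_1}(\lambda_1)+\mu_{g_2}(\lambda_2)$, hence $(g_1+g_2)^*(t_1+t_2)\le g_1^*(t_1)+g_2^*(t_2)$; combined with sublinearity this controls $(Tf)^*$ by $(Tg_1)^*$ and $(Tg_2)^*$ after any splitting $f=g_1+g_2$. (ii) The hypothesis ``$T$ of weak type $(a,b)$'' unwinds to the pointwise bound $(Tg)^*(t)\le C\,t^{-1/b}\,\|g\|_{L^{a}(\mu)}$ for $g\in L^{a}(\mu)$. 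Then, for each fixed $t>0$, I would split $f=g_t+h_t$ by truncating at the level $f^*(\eta t^{\kappa})$: put $g_t=f\,\chi_{\{|f|>f^*(\eta t^{\kappa})\}}$ and $h_t=f-g_t$, where the exponent $\kappa$ and the constant $\eta>0$ are to be fixed at the end. Since $q'<q_0$, the ``tall'' piece obeys $\|g_t\|_{L^{q'}(\mu)}\asymp\big(\int_0^{\eta t^{\kappa}}f^*(u)^{q'}\,du\big)^{1/q'}$ and the ``short'' piece obeys $\|h_t\|_{L^{q_0}(\mu)}^{q_0}\asymp\eta t^{\kappa}f^*(\eta t^{\kappa})^{q_0}+\int_{\eta t^{\kappa}}^\infty f^*(u)^{q_0}\,du$.

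Applying (i) at $\lambda=\tfrac{\lambda}{2}+\tfrac{\lambda}{2}$ and then (ii) to $Tg_t$ (weak type $(q',r')$) and to $Th_t$ (weak type $(q_0,r_0)$), I obtain
\begin{equation*}
(Tf)^*(t)\le C\,t^{-1/r'}\|g_t\|_{L^{q'}(\mu)}+C\,t^{-1/r_0}\|h_t\|_{L^{q_0}(\mu)}.
\end{equation*}
Substituting the two norm estimates, multiplying by $t^{1/r}$, raising to the power $s$, integrating against $\tfrac{dt}{t}$ (or taking the supremum when $s=\infty$), and changing variables $t^{\kappa}\mapsto t$ reduces the matter to two weighted one-dimensional integrals in $f^*$. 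The $g_t$-term is of $\int_0^t$ type and the $h_t$-term of $\int_t^\infty$ type, so the two halves of Hardy's inequality,
\begin{equation*}
\int_0^\infty\Big(t^{-b}\!\int_0^t\!\phi\Big)^s\frac{dt}{t}\le C\!\int_0^\infty(t^{1-b}\phi(t))^s\frac{dt}{t},\qquad \int_0^\infty\Big(t^{b}\!\int_t^\infty\!\phi\Big)^s\frac{dt}{t}\le C\!\int_0^\infty(t^{1+b}\phi(t))^s\frac{dt}{t}\qquad(b>0,\ 1\le s<\infty),
\end{equation*}
collapse each term into a constant multiple of $\int_0^\infty(t^{1/q}f^*(t))^s\,\tfrac{dt}{t}\asymp\|f\|_{L^{q,s}(\mu)}^s$, which is the conclusion.

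The main obstacle is purely bookkeeping: one must choose $\kappa$ (it comes out, up to sign, as $\kappa=(\tfrac1{q'}-\tfrac1{q_0})/(\tfrac1{r'}-\tfrac1{r_0})$) so that after the change of variables the exponents $b$ appearing in the two Hardy inequalities are \emph{strictly positive}. This is exactly where the standing assumptions $q'<q_0$ and $r'\ne r_0$ are consumed — the off-diagonal condition $r'\ne r_0$ is what keeps $\kappa$ finite and gives genuine room in the borderline exponents. The remaining points (inheritance of sublinearity on each $L^{a}(\mu)$, the boundary term in $\|h_t\|_{L^{q_0}(\mu)}$, and the $s=\infty$ endpoint) are routine, and I would dispatch them with a sentence each.
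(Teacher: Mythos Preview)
The paper does not prove Theorem~\ref{T^}; it is stated in the preliminaries as a known result (the Marcinkiewicz interpolation theorem transported to the weighted $p$-adic measure $d\mu=|\mathbf{x}|_p^\alpha\,d\mathbf{x}$) and is simply invoked later in Theorems~4.2 and~5.2. So there is no paper proof to compare against.

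Your sketch is the standard Hunt--Calder\'on proof of the off-diagonal Marcinkiewicz theorem and is correct in outline: the $\sigma$-finiteness of $\mu$ (from $\alpha>-n$) makes the rearrangement machinery available, the level-set splitting at $f^*(\eta t^\kappa)$ is the right decomposition, and the choice $\kappa=(1/q'-1/q_0)/(1/r'-1/r_0)$ is exactly what aligns the exponents so that both Hardy inequalities apply with strictly positive $b$. One small remark: the paper defines ``weak type $(q,r)$'' as boundedness $L^{q,1}\to L^{r,\infty}$, whereas your step~(ii) uses the stronger $L^{q}\to L^{r,\infty}$ hypothesis; for the applications in the paper (Theorems~4.1 and~5.1) this distinction is harmless since those theorems actually establish the $L^q\to L^{r,\infty}$ bound, but if you want to match the stated hypothesis literally you would replace $\|g_t\|_{L^{q'}(\mu)}$ and $\|h_t\|_{L^{q_0}(\mu)}$ by the corresponding $L^{q',1}$ and $L^{q_0,1}$ norms, which changes nothing structurally.
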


\begin{definition}
\cite{WF} Let $1\leq q<\infty$ and $-1/q\leq\lambda<0.$ A function $f\in
L^{p}_{loc}(\mathbb{Q}_p^n)$ if 
\begin{eqnarray*}
\begin{aligned}\dot{B}^{q,\lambda}(\mathbb{Q}_p^n)=\sup_{\gamma\in%
\mathbb{Z}}\bigg(\frac{1}{|B_{\gamma}|_{H}^{{1+\lambda
q}}}\int_{B_{\gamma}}|f(\mathbf{x})|^{q}d\mathbf{x}\bigg)^{1/q}.
\end{aligned}
\end{eqnarray*}
When $\lambda=-1/q,$ then $\dot{B}^{q,\lambda}(\mathbb{Q}_p^n)=L^{q}(\mathbb{%
Q}_p^n).$ It is not hard to see that $\dot{B}^{q,\lambda}(\mathbb{Q}_p^n)$
is reduced to \{0\} whenever $\lambda<-1/q.$
\end{definition}

\begin{definition}
\cite{WF1} Let $1\leq q<\infty$ and $-1/q\leq\lambda<0.$ The $p$-adic weak
central Morrey space $W\dot{B}^{q,\lambda}(\mathbb{Q}_p^n)$ is defined as 
\begin{eqnarray*}
\begin{aligned}W\dot{B}^{q,\lambda}(\mathbb{Q}_p^n)=\{f:\|f\|_{W\dot{B}^{q,%
\lambda}(\mathbb{Q}_p^n)}<\infty\}, \end{aligned}
\end{eqnarray*}
where 
\begin{eqnarray*}
\begin{aligned}\|f\|_{W\dot{B}^{q,\lambda}(\mathbb{Q}_p^n)}=\sup_{\gamma\in%
\mathbb{Z}}|B_{\gamma}|_{H}^{-\lambda-1/q}\|f\|_{WL^{q}(B_{\gamma})},%
\end{aligned}
\end{eqnarray*}%
and $\|f\|_{WL^{q}(B_{\gamma})}$ is the local $p$-adic $L^{q}$-norm of $f(x)$
restricted to the ball $B_{\gamma}$, that is 
\begin{eqnarray*}
\begin{aligned}\|f\|_{WL^{q}(B_{\gamma})}=\sup_{\lambda>0}|\{\mathbf{x}\in
B_{\gamma}:|f(\mathbf{x})|>\lambda\}|^{1/q}. \end{aligned}
\end{eqnarray*}
It is clear that if $\lambda=-1/q,$ then $W\dot{B}^{q,\lambda}(\mathbb{Q}%
_p^n)=L^{q,\infty}(\mathbb{Q}_p^n)$ is a $p$-adic weak $L^{q}$ space. Also, $%
\dot{B}^{q,\lambda}(\mathbb{Q}_p^n)\subseteq W\dot{B}^{q,\lambda}(\mathbb{Q}%
_p^n)$ for $1\leq q<\infty$ and $-1/q<\lambda<0.$
\end{definition}

In the upcoming section, we prove the boundedness of Hausdorff operator on $p
$-adic weak central Morrey space.

\section{Boundedness of Hausdorff operator on weak Central Morrey space}

\begin{theorem}
\label{T11}Let $1\leq q<\infty$ and let $-1/q\leq\lambda<0.$ If $\Phi$ is a
radial function, that is $\Phi(\mathbf{x})=\psi(|\mathbf{x}|_{p}),$ where $%
\psi$ is defined in all $p^{k},k\in\mathbb{Z}$ and $f\in\dot{B}^{q,\lambda}(%
\mathbb{Q}_p^n),$ then 
\begin{eqnarray*}
\begin{aligned}\|H_{\Phi}f\|_{W\dot{B}^{q,\lambda}(\mathbb{Q}_p^n)}\leq
K_{1}(1-p^{-n})^{1/q'}|\|f\|_{\dot{B}^{q,\lambda}(\mathbb{Q}_p^n)},
\end{aligned}
\end{eqnarray*}
where $K_{1}=C\int_{0}^{\infty}\psi(t)t^{-n\lambda-1}dt.$
\end{theorem}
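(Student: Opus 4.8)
The plan is to reduce the operator estimate to a one-dimensional integral inequality by exploiting the radial structure of $\Phi$ together with the dilation behaviour of $p$-adic balls. First I would fix $\gamma\in\mathbb{Z}$ and estimate $\|H_\Phi f\|_{WL^q(B_\gamma)}$. Writing $\Phi(\mathbf{x}|\mathbf{y}|_p)=\psi(|\mathbf{x}|_p|\mathbf{y}|_p)$ and decomposing $\mathbb{Q}_p^n=\bigcup_{k\in\mathbb{Z}}S_k$ into spheres, one has for $\mathbf{x}\in B_\gamma$
\begin{equation*}
|H_\Phi f(\mathbf{x})|\leq\sum_{k\in\mathbb{Z}}p^{-k(n-0)}\,|\psi(|\mathbf{x}|_p\,p^{k})|\int_{S_k}|f(\mathbf{y})|\,d\mathbf{y}.
\end{equation*}
Then I would apply Minkowski's integral inequality for the $WL^q(B_\gamma)$ (quasi-)norm, or rather pass first through the strong $L^q(B_\gamma)$ bound and use $\|\cdot\|_{WL^q}\le\|\cdot\|_{L^q}$, to move the norm inside the $k$-sum; the inner spherical integral is controlled by Hölder's inequality on $S_k$, giving $\int_{S_k}|f|\le |S_k|^{1/q'}\big(\int_{S_k}|f|^q\big)^{1/q}\le (1-p^{-n})^{1/q'}p^{kn/q'}\|f\|_{L^q(B_k)}$, which is where the factor $(1-p^{-n})^{1/q'}$ enters.

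Next I would insert the definition of the $\dot B^{q,\lambda}$ norm, namely $\|f\|_{L^q(B_k)}\le |B_k|^{1/q+\lambda}\|f\|_{\dot B^{q,\lambda}}=p^{kn(1/q+\lambda)}\|f\|_{\dot B^{q,\lambda}}$, and likewise bound $\|1\|_{WL^q(B_\gamma)}\le|B_\gamma|^{1/q}=p^{\gamma n/q}$ coming from the trivial estimate of $\|H_\Phi f\|_{WL^q(B_\gamma)}$ once the $\mathbf{x}$-dependence has been separated. Collecting the powers of $p$, the term indexed by $k$ carries a factor $p^{-kn}\cdot p^{kn/q'}\cdot p^{kn(1/q+\lambda)}=p^{kn\lambda}$ together with $\psi$ evaluated near $p^{k}$ relative to the scale $p^{-\gamma}$; substituting $t\sim |\mathbf{x}|_p p^{k}$ and recognising that summation over $k\in\mathbb Z$ of $\psi(p^{k-\gamma})p^{(k-\gamma)n\lambda}$ times the discrete measure is comparable to the integral $\int_0^\infty\psi(t)\,t^{-n\lambda-1}\,dt$ (this is the standard device: $\int_0^\infty\psi(t)t^{-n\lambda-1}dt=\sum_k\int_{p^{k}}^{p^{k+1}}\psi(t)t^{-n\lambda-1}dt$ and on each annulus $t\asymp p^{k}$, with the Haar-measure identity $|S_k|=p^{kn}(1-p^{-n})$ absorbing the width), one obtains the constant $K_1$. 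Finally, because every bound produced is uniform in $\gamma$, taking $\sup_\gamma |B_\gamma|^{-\lambda-1/q}(\cdots)$ yields exactly $\|H_\Phi f\|_{W\dot B^{q,\lambda}}\le K_1(1-p^{-n})^{1/q'}\|f\|_{\dot B^{q,\lambda}}$.

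The main obstacle I anticipate is the careful bookkeeping of exponents when converting the sum over spheres $S_k$ into the claimed one-dimensional integral $\int_0^\infty\psi(t)t^{-n\lambda-1}dt$, in particular tracking how $|\mathbf{x}|_p$ (which ranges over $p^{j}$, $j\le\gamma$, for $\mathbf{x}\in B_\gamma$) interacts with the argument $|\mathbf{x}|_p p^{k}$ of $\psi$; one must check that the $\mathbf{x}$-dependence really factors out cleanly so that the weak-$L^q$ norm over $B_\gamma$ only sees the constant function, and that the resulting geometric series in $k$ converges precisely under the hypothesis $-1/q\le\lambda<0$ (which guarantees $n\lambda<0$ and hence summability at $k\to+\infty$, while the finiteness of $K_1$ handles the other end). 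A secondary technical point is justifying the interchange of the integral defining $H_\Phi f$ with the $L^q(B_\gamma)$ norm, i.e. the use of Minkowski's inequality in the continuous/discrete hybrid form; this is routine given the positivity after taking absolute values, but should be stated. Everything else — Hölder on the sphere, the Haar-measure formulas $|B_\gamma|=p^{n\gamma}$, $|S_\gamma|=p^{n\gamma}(1-p^{-n})$, and the definitions of the two central Morrey norms — is quoted directly from the Preliminaries.
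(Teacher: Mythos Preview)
Your proposal has a genuine gap in how the $\mathbf{x}$-dependence is handled. After the sphere decomposition and H\"older's inequality on each $S_k$, together with the bound $\big(\int_{B_k}|f|^q\big)^{1/q}\le |B_k|^{1/q+\lambda}\|f\|_{\dot B^{q,\lambda}}$, what emerges is not a constant in $\mathbf{x}$ but the pointwise estimate
\[
|H_\Phi f(\mathbf{x})|\le A\,|\mathbf{x}|_p^{\,n\lambda},\qquad A=K_1(1-p^{-n})^{1/q'}\|f\|_{\dot B^{q,\lambda}}.
\]
Indeed, if $|\mathbf{x}|_p=p^{l}$, the change of index $m=l-k$ turns the $k$-sum into $p^{ln\lambda}\sum_m|\psi(p^{m})|p^{-mn\lambda}$; the factor $p^{ln\lambda}=|\mathbf{x}|_p^{n\lambda}$ cannot be replaced by a constant on $B_\gamma$ because $\lambda<0$ makes it blow up near the origin. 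So the step ``the weak-$L^q$ norm over $B_\gamma$ only sees the constant function'' is simply false, and your bound $\|1\|_{WL^q(B_\gamma)}\le |B_\gamma|^{1/q}$ is not the object you need to estimate.

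Your fallback of passing through the strong $L^q(B_\gamma)$ norm does work when $-1/q<\lambda<0$, since then $|\mathbf{x}|_p^{n\lambda}\in L^q(B_\gamma)$ and a direct computation gives a bound uniform in $\gamma$ (at the cost of an extra constant $\big((1-p^{-n})/(1-p^{-n(1+q\lambda)})\big)^{1/q}$). However, at the endpoint $\lambda=-1/q$ included in the hypothesis, $|\mathbf{x}|_p^{qn\lambda}=|\mathbf{x}|_p^{-n}$ is not locally integrable, so $\|\,|\mathbf{x}|_p^{n\lambda}\|_{L^q(B_\gamma)}=\infty$ and the strong-norm route collapses. The paper avoids this by computing the \emph{weak} $L^q(B_\gamma)$ norm of $A|\mathbf{x}|_p^{n\lambda}$ directly: for each level $t>0$ one measures the sublevel set $\{\mathbf{x}\in B_\gamma:|\mathbf{x}|_p<(t/A)^{1/(n\lambda)}\}$, splitting into the two cases $p^\gamma\le (t/A)^{1/(n\lambda)}$ and $p^\gamma>(t/A)^{1/(n\lambda)}$, and in both cases the supremum over $t$ and $\gamma$ evaluates exactly to $A$. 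This level-set computation, rather than Minkowski or the strong norm, is the missing ingredient in your plan.
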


\begin{proof}
We first consider%
\begin{eqnarray*}
\begin{aligned}H_{\Phi}=&\int_{\mathbb{Q}_p^n}\frac{\Phi(\mathbf{x}|%
\mathbf{y}|_{p})}{|\mathbf{y}|_{p}^{n}}f(\mathbf{y})d\mathbf{y}\\
=&\sum_{k\in\mathbb{Z}}\int_{S_{k}}\frac{\Phi(\mathbf{x}|\mathbf{y}|_{p})}{|%
\mathbf{y}|_{p}^{n}}f(\mathbf{y})d\mathbf{y}. \end{aligned}
\end{eqnarray*}
By H\"{o}lder's inequality, we have: 
\begin{eqnarray}
\begin{aligned}[b]\label{e1}|H_{\Phi}|\leq&\sum_{k\in\mathbb{Z}}\bigg(\bigg(%
\int_{S_{k}}\frac{|\Phi(\mathbf{x}|\mathbf{y}|_{p})|^{q'}}{|%
\mathbf{y}|_{p}^{nq'}}d\mathbf{y}\bigg)^{1/q'}\bigg(\int_{S_{k}}|f(%
\mathbf{y})|^{q}d\mathbf{y}\bigg)^{1/q}\bigg)\\
\leq&\sum_{k\in\mathbb{Z}}\bigg(\bigg(\int_{S_{k}}\frac{|\Phi(\mathbf{x}|%
\mathbf{y}|_{p})|^{q'}}{|\mathbf{y}|_{p}^{nq'}}d\mathbf{y}\bigg)^{1/q'}%
\bigg(\int_{B_{k}}|f(\mathbf{y})|^{q}d\mathbf{y}\bigg)^{1/q}\bigg)\\
\leq&\|f\|_{\dot{B}^{q,\lambda}(\mathbb{Q}_p^n)}\sum_{k\in%
\mathbb{Z}}|B_{k}|_{H}^{1/q+\lambda}\bigg(\int_{S_{k}}\frac{|\Phi(%
\mathbf{x}|\mathbf{y}|_{p})|^{q'}}{|\mathbf{y}|_{p}^{nq'}}d\mathbf{y}%
\bigg)^{1/q'}. \end{aligned}
\end{eqnarray}
If $|\mathbf{x}|_{p}=p^{l}, l\in\mathbb{Z},$ then $\Phi(\mathbf{x}|\mathbf{y}%
|_{p})=\psi(p^{l-k}),$ then we take: 
\begin{eqnarray*}
\begin{aligned}\sum_{k\in\mathbb{Z}}|B_{k}|_{H}^{1/q+\lambda}\bigg(%
\int_{S_{k}}\frac{|\Phi(\mathbf{x}|\mathbf{y}|_{p})|^{q'}}{|%
\mathbf{y}|_{p}^{nq'}}d\mathbf{y}\bigg)^{1/q'}=&\sum_{k\in%
\mathbb{Z}}p^{kn(1/q+\lambda)}\bigg(\int_{S_{k}}\frac{|%
\psi(p^{l-k})|^{q'}}{p^{knq'}}d\mathbf{y}\bigg)^{1/q'}\\
=&(1-p^{-n})^{1/q'}\sum_{k\in\mathbb{Z}}|\psi(p^{l-k})|p^{kn\lambda}\\
=&(1-p^{-n})^{1/q'}p^{ln\lambda}\sum_{k\in\mathbb{Z}}|%
\psi(p^{l-k})|p^{(l-k)(-n\lambda)-1+1}\\
\leq&C(1-p^{-n})^{1/q'}p^{ln\lambda}\int_{0}^{\infty}\psi(t)t^{-n%
\lambda-1}dt\\
=&C(1-p^{-n})^{1/q'}|\mathbf{x}|_{p}^{n\lambda}\int_{0}^{\infty}\psi(t)t^{-n%
\lambda-1}dt. \end{aligned}
\end{eqnarray*}
We majorized at the penultimate step and last step is courtesy of $|\mathbf{x%
}|_{p}=p^l.$\newline
Letting $K_{1}=C\int_{0}^{\infty}\psi(t)t^{-n\lambda-1}dt$ and by putting
the above values in (\ref{e1}), we get: 
\begin{eqnarray*}
\begin{aligned}|H_{\Phi}|\leq&K_{1}(1-p^{-n})^{1/q'}|\mathbf{x}|_{p}^{n%
\lambda}\|f\|_{\dot{B}^{q,\lambda}(\mathbb{Q}_p^n)}. \end{aligned}
\end{eqnarray*}
Let $A=K_{1}(1-p^{-n})^{1/q^{\prime }}\|f\|_{\dot{B}^{q,\lambda}(\mathbb{Q}%
_p^n)}.$\newline
Since $\lambda<0,$ we have: 
\begin{eqnarray*}
\begin{aligned}\|H_{\Phi}f\|_{W\dot{B}^{q,\lambda}(\mathbb{Q}_p^n)}\leq&%
\sup_{\gamma\in\mathbb{Z}}\sup_{t>0}t|B_{\gamma}|_{H}^{-\lambda-1/q}|\{%
\mathbf{x}\in B_{\gamma}:A|\mathbf{x}|_{p}^{n\lambda}>t\}|^{1/q}\\
=&\sup_{\gamma\in\mathbb{Z}}\sup_{t>0}t|B_{\gamma}|_{H}^{-\lambda-1/q}|\{|%
\mathbf{x}|_{p}\leq p^{\gamma}:|\mathbf{x}|_{p}<(t/A)^{1/n\lambda}\}|^{1/q}.
\end{aligned}
\end{eqnarray*}
If $\gamma\leq\log_{p}(t/A)^{1/n\lambda},$ then for $\lambda<0,$ we get: 
\begin{eqnarray*}
\begin{aligned}&\sup_{t>0}\sup_{\gamma\leq\log_{p}(t/A)^{1/n\lambda}}t|B_{%
\gamma}|_{H}^{-\lambda-1/q}|\{|\mathbf{x}|_{p}\leq
p^{\gamma}:|\mathbf{x}|_{p}<(t/A)^{1/n\lambda}\}|^{1/q}\\
=&\sup_{t>0}\sup_{\gamma\leq\log_{p}(t/A)^{1/n\lambda}}t|B_{\gamma}|_{H}^{-%
\lambda}\\
=&\sup_{t>0}\sup_{\gamma\leq\log_{p}(t/A)^{1/n\lambda}}tp^{-n\gamma\lambda}%
\\ =&A\\
=&K_{1}(1-p^{-n})^{1/q'}|\|f\|_{\dot{B}^{q,\lambda}(\mathbb{Q}_p^n)}.
\end{aligned}
\end{eqnarray*}
Now, if $\gamma>\log_{p}(t/A)^{1/n\lambda},$ then for $\lambda\geq-1/q,$ we
have: 
\begin{eqnarray*}
\begin{aligned}&\sup_{t>0}\sup_{\gamma>\log_{p}(t/A)^{1/n\lambda}}t|B_{%
\gamma}|_{H}^{-\lambda-1/q}|\{|\mathbf{x}|_{p}\leq
p^{\gamma}:|\mathbf{x}|_{p}<(t/A)^{1/n\lambda}\}|^{1/q}\\
=&\sup_{t>0}\sup_{\gamma>\log_{p}(t/A)^{1/n\lambda}}t p^{\gamma
n(-\lambda-1/q)}||\mathbf{x}|_{p}<(t/A)^{1/n\lambda}|^{1/q}\\
=&\sup_{t>0}\sup_{\gamma>\log_{p}(t/A)^{1/n\lambda}}t p^{\gamma
n(-\lambda-1/q)}(t/A)^{1/\lambda q}\\ =&A\\
=&K_{1}(1-p^{-n})^{1/q'}\|f\|_{\dot{B}^{q,\lambda}(\mathbb{Q}_p^n)}.
\end{aligned}
\end{eqnarray*}
Therefore,%
\begin{eqnarray*}
\begin{aligned}\|H_{\Phi
}f\|_{W\dot{B}^{q,\lambda}(\mathbb{Q}_p^n)}\leq&K_{1}(1-p^{-n})^{1/q'}|\|f%
\|_{\dot{B}^{q,\lambda}(\mathbb{Q}_p^n)}. \end{aligned}
\end{eqnarray*}
\end{proof}

\section{Weak and Strong Boundedness of Fractional Hausdorff Operator}

In the current section, we obtain the weak bounds of fractional Hausdorff
operator on weighted $p$-adic weak Lebesgue space. Furthermore, strong type
estimates of same operator for weighted $p$-adic Lorentz space are also
attained.

\begin{theorem}
\label{T1}Let $0\leq\beta<n$ and $1\leq q, r<\infty.$ Let also $%
\min\{\alpha,\gamma\}>-n,$ $w(\mathbf{x})=|\mathbf{x}|_{p}^{\alpha},
\alpha>-n.$ If $\Phi$ is radial function, $\frac{n+\alpha}{q}-\beta=\frac{%
n+\gamma}{r}$ and%
\begin{equation}  \label{yb}
\mathcal{A}^{q^{\prime }}(\psi,q)=\int_{0}^{\infty}|\psi(t)|^{q^{\prime
}}t^{(n+\alpha)(q^{\prime }-1)-\beta q^{\prime }-1}dt,
\end{equation}
then $\|H_{\Phi,\beta}(f)\|_{L^{r,\infty}(|\mathbf{x}|_p^\gamma,\mathbb{Q}%
_p^n)}\leq K_{2}\|f\|_{L^{q}(|\mathbf{x}|_p^\alpha,\mathbb{Q}_p^n)},$\newline
where $K_{2}=C\bigg(\frac{1-p^{-n}}{1-p^{-(n+\gamma)}}\bigg)%
^{1/r}(1-p^{-n})^{1/q^\prime}\mathcal{A}(\psi,q).$
\end{theorem}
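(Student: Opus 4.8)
The plan is to follow the same dyadic decomposition strategy used in the proof of Theorem \ref{T11}, but now carrying the weights $|\mathbf{x}|_p^\alpha$ and $|\mathbf{x}|_p^\gamma$ through the estimates and exploiting the scaling relation $\frac{n+\alpha}{q}-\beta=\frac{n+\gamma}{r}$ at the crucial homogeneity step. First I would write $H_{\Phi,\beta}f(\mathbf{x})=\sum_{k\in\mathbb{Z}}\int_{S_k}\frac{\Phi(\mathbf{x}|\mathbf{y}|_p)}{|\mathbf{y}|_p^{n-\beta}}f(\mathbf{y})\,d\mathbf{y}$ and apply H\"older's inequality with exponents $q,q'$ on each sphere $S_k$, but now splitting the integrand so that the weight $|\mathbf{y}|_p^{\alpha}$ appears with $f$: write $|f(\mathbf{y})| = \bigl(|f(\mathbf{y})|^q |\mathbf{y}|_p^\alpha\bigr)^{1/q}\cdot |\mathbf{y}|_p^{-\alpha/q}$. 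This produces, for $|\mathbf{x}|_p=p^l$ and $|\mathbf{y}|_p=p^k$ on $S_k$,
\begin{eqnarray*}
\begin{aligned}
|H_{\Phi,\beta}f(\mathbf{x})| &\leq \|f\|_{L^q(|\mathbf{x}|_p^\alpha)}\sum_{k\in\mathbb{Z}} |\psi(p^{l-k})|\, p^{-k(n-\beta)} p^{-k\alpha/q'}\bigl(|S_k|\bigr)^{1/q'}\\
&= (1-p^{-n})^{1/q'}\|f\|_{L^q(|\mathbf{x}|_p^\alpha)}\sum_{k\in\mathbb{Z}} |\psi(p^{l-k})|\, p^{-k(n-\beta)} p^{kn/q'-k\alpha/q'}.
\end{aligned}
\end{eqnarray*}
Collecting the powers of $p$ and using $\frac{n+\alpha}{q}-\beta=\frac{n+\gamma}{r}$ I expect the exponent to reorganize as $p^{-l(\frac{n+\gamma}{r})}$ times a sum $\sum_k |\psi(p^{l-k})| p^{(l-k)[(n+\alpha)(1-1/q')-\beta]}$ — no, more precisely the sum should become a Riemann-sum approximation to $\mathcal{A}^{q'}(\psi,q)^{1/q'}$ after substituting $t=p^{l-k}$, leaving the factor $|\mathbf{x}|_p^{-(n+\gamma)/r}$ outside. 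So the key pointwise bound I would aim for is
\begin{eqnarray*}
\begin{aligned}
|H_{\Phi,\beta}f(\mathbf{x})| \leq C(1-p^{-n})^{1/q'}\mathcal{A}(\psi,q)\,|\mathbf{x}|_p^{-(n+\gamma)/r}\,\|f\|_{L^q(|\mathbf{x}|_p^\alpha,\mathbb{Q}_p^n)}.
\end{aligned}
\end{eqnarray*}

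Once this pointwise estimate is in hand, the weak-type bound reduces to computing the weighted distribution function of the single function $\mathbf{x}\mapsto |\mathbf{x}|_p^{-(n+\gamma)/r}$ with respect to the measure $|\mathbf{x}|_p^\gamma d\mathbf{x}$. Setting $A=C(1-p^{-n})^{1/q'}\mathcal{A}(\psi,q)\|f\|_{L^q(|\mathbf{x}|_p^\alpha)}$, I need
\begin{eqnarray*}
\begin{aligned}
\sup_{\lambda>0}\lambda\,\Bigl(w\{\mathbf{x}: A|\mathbf{x}|_p^{-(n+\gamma)/r}>\lambda\}\Bigr)^{1/r} = \sup_{\lambda>0}\lambda\,\Bigl(\int_{|\mathbf{x}|_p<(A/\lambda)^{r/(n+\gamma)}}|\mathbf{x}|_p^\gamma\,d\mathbf{x}\Bigr)^{1/r}.
\end{aligned}
\end{eqnarray*}
The inner integral over a $p$-adic ball of radius $p^N$ is $\sum_{k\leq N}p^{k\gamma}|S_k| = (1-p^{-n})\sum_{k\leq N}p^{k(n+\gamma)} = \frac{1-p^{-n}}{1-p^{-(n+\gamma)}}p^{N(n+\gamma)}$, valid since $\gamma>-n$. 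Plugging $N=\log_p(A/\lambda)^{r/(n+\gamma)}$ (up to the integer-part issue, which only costs an absolute constant absorbed into $C$) gives the inner integral comparable to $\frac{1-p^{-n}}{1-p^{-(n+\gamma)}}(A/\lambda)^{r}$, so the supremum equals $\bigl(\frac{1-p^{-n}}{1-p^{-(n+\gamma)}}\bigr)^{1/r}A$, which is exactly $K_2\|f\|_{L^q(|\mathbf{x}|_p^\alpha,\mathbb{Q}_p^n)}$.

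The main obstacle I anticipate is bookkeeping the exponents of $p$ correctly through the H\"older step so that they collapse precisely into the combination $(n+\alpha)(q'-1)-\beta q'-1$ appearing in $\mathcal{A}^{q'}(\psi,q)$ and simultaneously produce the clean homogeneity $|\mathbf{x}|_p^{-(n+\gamma)/r}$; this is where the hypothesis $\frac{n+\alpha}{q}-\beta=\frac{n+\gamma}{r}$ must be used, and getting the integrability of $\psi$ (finiteness of $\mathcal{A}(\psi,q)$) to control the infinite sum via a monotone/Riemann-sum comparison — as was done implicitly in Theorem \ref{T11} with the step ``we majorized at the penultimate step'' — requires care since $\psi$ is only assumed defined on powers of $p$. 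A secondary technical point is that $(t/A)^{r/(n+\gamma)}$ need not be an integral power of $p$, so the ball in the distribution-function computation should be taken as the largest ball contained in $\{|\mathbf{x}|_p<(A/\lambda)^{r/(n+\gamma)}\}$, which changes the value by at most a factor $p^{n+\gamma}$; since we are proving an inequality with an unspecified constant $C$, this is harmless. Everything else — the use of $\alpha>-n$ and $\gamma>-n$ to guarantee convergence of the geometric series, and the fact that $q\le r$ is not actually needed for the weak bound — is routine.
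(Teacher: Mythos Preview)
Your overall strategy matches the paper's exactly: obtain a pointwise bound $|H_{\Phi,\beta}f(\mathbf{x})|\le C\,|\mathbf{x}|_p^{-(n+\gamma)/r}\|f\|_{L^q(|\mathbf{x}|_p^\alpha)}$ via H\"older's inequality, then compute the $|\mathbf{x}|_p^\gamma$-weighted distribution function of $|\mathbf{x}|_p^{-(n+\gamma)/r}$, which yields precisely the factor $\bigl(\tfrac{1-p^{-n}}{1-p^{-(n+\gamma)}}\bigr)^{1/r}$. The distribution-function half of your argument is identical to the paper's.

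There is, however, a discrepancy in the H\"older step that affects the constant. The paper applies H\"older \emph{globally} on $\mathbb{Q}_p^n$ (after inserting the weight exactly as you do, writing the kernel as $\Phi(\mathbf{x}|\mathbf{y}|_p)/|\mathbf{y}|_p^{n-\beta+\alpha/q}$) and only afterwards decomposes the resulting $q'$-integral into spheres; this produces the sum $\sum_k |\psi(p^{l-k})|^{q'}p^{(l-k)[(n+\alpha)(q'-1)-\beta q']}$, a Riemann sum for $\mathcal{A}^{q'}(\psi,q)$. Your route---H\"older on each $S_k$, then bounding every local factor $\bigl(\int_{S_k}|f|^q|\mathbf{y}|_p^\alpha\,d\mathbf{y}\bigr)^{1/q}$ by the full norm---instead gives $\sum_k |\psi(p^{l-k})|\,p^{(l-k)[(n+\alpha)/q-\beta]}$, i.e.\ a Riemann sum for $\int_0^\infty|\psi(t)|\,t^{(n+\alpha)/q-\beta-1}\,dt$, which is \emph{not} $\mathcal{A}(\psi,q)$. (Incidentally, your exponent $-k\alpha/q'$ should be $-k\alpha/q$.) This matters because the hypothesis only asserts finiteness of $\mathcal{A}(\psi,q)$, and because the stated $K_2$ is expressed through $\mathcal{A}(\psi,q)$. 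The fix is immediate: either apply H\"older once on all of $\mathbb{Q}_p^n$ as the paper does, or after the sphere-wise H\"older use the discrete H\"older $\sum_k a_k b_k\le(\sum_k a_k^{q'})^{1/q'}(\sum_k b_k^q)^{1/q}$ with $b_k^q=\int_{S_k}|f|^q|\mathbf{y}|_p^\alpha\,d\mathbf{y}$, so that $\sum_k b_k^q=\|f\|_{L^q(|\mathbf{x}|_p^\alpha)}^q$ exactly.
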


\begin{proof}
We first consider: 
\begin{eqnarray*}
\begin{aligned}H_{\Phi,\beta}f(\mathbf{x})=&\int_{\mathbb{Q}_p^n}\frac{\Phi(%
\mathbf{x}|\mathbf{y}|_p)}{|\mathbf{y}|_{p}^{n-\beta}w(\mathbf{y})^{1/q}}f(%
\mathbf{y})w(\mathbf{y})^{1/q}d\mathbf{y}\\=&\int_{\mathbb{Q}_p^n}\frac{%
\Phi(\mathbf{x}|\mathbf{y}|_p)}{|\mathbf{y}|_{p}^{n-\beta+\alpha/q}}f(%
\mathbf{y})w(\mathbf{y})^{1/q}d\mathbf{y} \end{aligned}
\end{eqnarray*}
Applying H\"{o}lder's inequality at the outset to have: 
\begin{eqnarray}
\begin{aligned}[b]\label{E1}|H_{\Phi,\beta}f(\mathbf{x})|\leq&\bigg\{\int_{%
\mathbb{Q}_p^n}\bigg|\frac{\Phi(\mathbf{x}|\mathbf{y}|_p)}{|%
\mathbf{y}|_{p}^{n-\beta+\alpha/q}}\bigg|^{q^\prime}d\mathbf{y}\bigg\}^{1/q^%
\prime}\bigg\{\int_{\mathbb{Q}_p^n}|f(\mathbf{y})|^{q}|\mathbf{y}|_{p}^{%
\alpha}d\mathbf{y}\bigg\}^{1/q}\\
=&\bigg\{\int_{\mathbb{Q}_p^n}\bigg|\frac{\Phi(\mathbf{x}|\mathbf{y}|_p)}{|%
\mathbf{y}|_{p}^{n-\beta+\alpha/q}}\bigg|^{q^\prime}d\mathbf{y}\bigg\}^{1/q^%
\prime}\|f\|_{L^{q}(|\mathbf{x}|_{p}^{\alpha}; \mathbb{Q}_p^n)}.
\end{aligned}
\end{eqnarray}
Sine $\Phi$ is radial function then: 
\begin{eqnarray*}
\begin{aligned}\int_{\mathbb{Q}_p^n}\frac{|\Phi(\mathbf{x}|%
\mathbf{y}|_p)|^{q^\prime}}{|\mathbf{y}|_{p}^{(n-\beta+\alpha/q)q^\prime}}d%
\mathbf{y}=&\sum_{k\in\mathbb{Z}}\int_{S_{k}}\frac{|\psi(p^{l-k})|^{q^%
\prime}}{p^{k(n-\beta+\alpha/q)q^\prime}}d\mathbf{y}\\
=&(1-p^{-n})p^{l((n+\alpha)(1-q^\prime)+\beta
q')}\sum_{k\in\mathbb{Z}}|\psi(p^{l-k})|^{q^\prime}p^{(l-k)((n+\alpha)(q^%
\prime-1)-\beta q')-1+1}\\
\leq&C(1-p^{-n})|\mathbf{x}|_{p}^{-((n+\alpha)(q^\prime-1)-\beta
q')}\int_{0}^{\infty}|\psi(t)|^{q^\prime}t^{(n+\alpha)(q^\prime-1)-\beta
q'-1}dt\\ =&C(1-p^{-n})|\mathbf{x}|_{p}^{-((n+\alpha)(q^\prime-1)-\beta
q')}\mathcal{A}^{q^\prime}(\psi,q). \end{aligned}
\end{eqnarray*}
Therefore, by the stipulation $\frac{n+\alpha}{q}-\beta=\frac{n+\gamma}{r},$
(\ref{E1}) becomes: 
\begin{eqnarray*}
\begin{aligned}|H_{\Phi,\beta}f(\mathbf{x})|\leq &C
(1-p^{-n})^{1/q^\prime}\mathcal{A}(\psi,q)|\mathbf{x}|_{p}^{-(n+\alpha)/q
+\beta}\|f\|_{L^q(|\mathbf{x}|_p^\alpha,\mathbb{Q}_p^n)}%
\\=&C(1-p^{-n})^{1/q^\prime}|\mathbf{x}|_{p}^{-(n+\gamma)/r}\|f\|_{L^q(|%
\mathbf{x}|_{p}^{\alpha};\mathbb{Q}_p^n)}. \end{aligned}
\end{eqnarray*}
Since $C_{1}=C(1-p^{-n})^{1/q^\prime}\mathcal{A}(\psi,q)\|f\|_{L^q(|\mathbf{x%
}|_p^\alpha,\mathbb{Q}_p^n)},$ then for $\lambda>0,$ 
\begin{eqnarray*}
\begin{aligned}\{\mathbf{x}\in\mathbb{Q}_p^n:|H_{\Phi,\beta}f(\mathbf{x})|>%
\lambda\}\subset\{\mathbf{x}\in\mathbb{Q}_p^n:|\mathbf{x}|_{p}\leq(C_{1}/%
\lambda)^{r/n+\gamma}\}. \end{aligned}
\end{eqnarray*}
Therefore, 
\begin{eqnarray*}
\begin{aligned}[b]\label{bb}\|H_{\Phi,\beta}f(\mathbf{x})\|_{L^{r,%
\infty}(|x|_{p}^{\gamma},\mathbb{Q}_p^n)}\leq&\sup_{\lambda>0}\lambda\bigg(%
\int_{\mathbb{Q}_p^n}\chi_{\bigg\{\mathbf{x}\in\mathbb{Q}_p^n:|%
\mathbf{x}|_{p}<(C_{1}/\lambda)^{r/n+\gamma}\bigg\}}(\mathbf{x})|%
\mathbf{x}|_{p}^{\gamma}d\mathbf{x}\bigg)^{1/r}\\
=&\sup_{\lambda>0}\lambda\bigg(\int_{|\mathbf{x}|_{p}<(C_{1}/\lambda)^{r/n+%
\gamma}}|\mathbf{x}|_{p}^{\gamma}d\mathbf{x}\bigg)^{1/r}\\
=&\sup_{\lambda>0}\lambda\bigg(\sum_{j=-\infty}^{\log_{p}(C_{1}/%
\lambda)^{r/n+\gamma}}\int_{S_{j}}p^{j\gamma}d\mathbf{x}\bigg)^{1/r}\\
=&\bigg(\frac{1-p^{-n}}{1-p^{-(n+\gamma)}}\bigg)^{1/r}C_{1}\\
=&C\bigg(\frac{1-p^{-n}}{1-p^{-(n+\gamma)}}\bigg)^{1/r}(1-p^{-n})^{1/q^%
\prime}\mathcal{A}(\psi,q)\|f\|_{L^q(|\mathbf{x}|_p^\alpha,\mathbb{Q}_p^n)}%
\\ =&K_{2}\|f\|_{L^q(|\mathbf{x}|_p^\alpha,\mathbb{Q}_p^n)}. \end{aligned}
\end{eqnarray*}
Hence, $H_{\Phi,\beta}$ has weak type $(q,r).$
\end{proof}

In the next theorem, we will prove strong estimates are also valid for $%
H_{\Phi,\beta}.$

\begin{theorem}
Let $0\leq\beta<n$, let $1\leq q, r<\infty.$ Let also $\min\{\alpha,\gamma%
\}>-n,$ $w(\mathbf{x})=|\mathbf{x}|_{p}^{\alpha}, \alpha>-n.$ If $\Phi$ is
radial function, let $\frac{n+\alpha}{q}-\beta=\frac{n+\gamma}{r}$ and
equation (\ref{yb}) is valid for $q\pm\epsilon$ instead of $q,$ then 
\begin{eqnarray*}
\begin{aligned}\|H_{\Phi,\beta}f\|_{L^{r,s}(|\mathbf{x}|_{p}^{\alpha},%
\mathbb{Q}_p^n)}\preceq\|f\|_{L^{q,s}(|\mathbf{x}|_{p}^{\alpha},%
\mathbb{Q}_p^n)}. \end{aligned}
\end{eqnarray*}
\end{theorem}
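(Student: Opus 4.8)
The plan is to obtain the strong-type bound on the weighted Lorentz scale by interpolating, via the Marcinkiewicz theorem (Theorem \ref{T^}), between two weak-type estimates at exponents lying on either side of $q$. Those two endpoint estimates are supplied by Theorem \ref{T1}; this is precisely why the hypothesis demands that the integral (\ref{yb}) be finite not only at $q$ but also at $q\pm\epsilon$.

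First I would fix $\epsilon>0$ small enough that $1\le q-\epsilon$, set $q_{1}=q-\epsilon$ and $q_{2}=q+\epsilon$, and define $r_{i}$ $(i=1,2)$ by the scaling identity $\tfrac{n+\alpha}{q_{i}}-\beta=\tfrac{n+\alpha}{r_{i}}$ — run with the target--weight exponent equal to $\alpha$, so that Theorem \ref{T^} can afterwards be applied with a single weight $|\mathbf{x}|_{p}^{\alpha}$ on domain and target. After possibly shrinking $\epsilon$ one may assume $1\le r_{i}<\infty$, and $r_{1}\ne r_{2}$ holds automatically since $\tfrac1{r_{i}}=\tfrac1{q_{i}}-\tfrac{\beta}{n+\alpha}$ and $q_{1}\ne q_{2}$. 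Because (\ref{yb}) is assumed finite at $q_{1}$ and at $q_{2}$, Theorem \ref{T1} applies at each of these exponents and shows that $H_{\Phi,\beta}$ is of weak type $(q_{1},r_{1})$ and of weak type $(q_{2},r_{2})$ relative to $w(\mathbf{x})=|\mathbf{x}|_{p}^{\alpha}$.

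Next I would choose $\vartheta\in(0,1)$ with $\tfrac1q=\tfrac{1-\vartheta}{q_{1}}+\tfrac{\vartheta}{q_{2}}$, which exists and is unique because $q_{1}<q<q_{2}$. Since the map $\tfrac1q\mapsto\tfrac1r$ determined by the exponent relation is affine in the reciprocals, a one-line computation gives
\begin{equation*}
\frac{1-\vartheta}{r_{1}}+\frac{\vartheta}{r_{2}}
=\Big(\frac{1-\vartheta}{q_{1}}+\frac{\vartheta}{q_{2}}\Big)-\frac{\beta}{n+\alpha}
=\frac{1}{q}-\frac{\beta}{n+\alpha}
=\frac{1}{r},
\end{equation*}
so $q$, $r$ and $\vartheta$ satisfy exactly the compatibility hypotheses of Theorem \ref{T^}. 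Applying that theorem with $T=H_{\Phi,\beta}$ then yields boundedness of $H_{\Phi,\beta}$ from $L^{q,s}(|\mathbf{x}|_{p}^{\alpha},\mathbb{Q}_{p}^{n})$ into $L^{r,s}(|\mathbf{x}|_{p}^{\alpha},\mathbb{Q}_{p}^{n})$ for every $1\le s<\infty$, which is the asserted estimate $\|H_{\Phi,\beta}f\|_{L^{r,s}(|\mathbf{x}|_{p}^{\alpha},\mathbb{Q}_{p}^{n})}\preceq\|f\|_{L^{q,s}(|\mathbf{x}|_{p}^{\alpha},\mathbb{Q}_{p}^{n})}$.

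The two invocations of Theorem \ref{T1} and the exponent arithmetic are routine. The step requiring the most care is verifying that the perturbed exponents genuinely lie in the admissible ranges of Theorem \ref{T^} — keeping $q_{1}\ge1$ and $1\le r_{1},r_{2}<\infty$ with $r_{1}\ne r_{2}$ — which may force one to choose the two offsets around $q$ asymmetrically (or to observe that $r>1$ whenever $\beta>0$), together with the bookkeeping that matches the weights so that domain and target both carry $|\mathbf{x}|_{p}^{\alpha}$. By contrast, the usual pitfall in such interpolation arguments — that the interpolation parameter forced by the domain exponents need not agree with the one forced by the target exponents — does not arise here, precisely because the exponent relation is affine in the reciprocals.
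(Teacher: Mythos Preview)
Your proposal is correct and follows essentially the same route as the paper: obtain two weak-type endpoints from Theorem~\ref{T1} at $q\pm\epsilon$ and then apply the Marcinkiewicz interpolation Theorem~\ref{T^}. You are in fact more careful than the paper's own argument, which simply writes the endpoint pairs as $(q\pm\epsilon,\,r\pm\epsilon)$ rather than determining $r_i$ through the scaling relation, and which does not address the single-weight bookkeeping for Theorem~\ref{T^} that you flag.
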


\begin{proof}
Since $1<q,r<\infty,$ so we can find $\epsilon$ such that $1<q-\epsilon$ and 
$1<r-\epsilon.$ Then, by Theorem (\ref{T1}) $H_{\Phi,\beta}$ has weak types $%
(q-\epsilon,r-\epsilon)$ and $(q+\epsilon,r+\epsilon).$ Desired result is
acquired by using Theorem \ref{T^}.
\end{proof}

\section{Weak and Strong Boundedness of commutator of Fractional Hausdorff
Operator}

This section comprises of weak boundedness of commutator of $p$-adic
fractional Hausdorff operator on weighted $p$-adic weak Lebesgue space. At
the section end, we also obtain the strong type estimates for the same
operator on power weighted $p$-adic Lorentz space.

\begin{theorem}
\label{T2}Let $1<q<r<\infty,$ $0<\delta<1.$ Let also $\min\{\alpha,\gamma%
\}>-n,$ $(\beta+\delta)-\frac{n+\alpha}{q}=-\frac{n+\gamma}{r},$ $w(\mathbf{x%
})=|\mathbf{x}|_{p}^{\alpha}, \alpha>-n.$ If $\Phi$ is radial function, $%
b\in\Lambda_{\delta}(\mathbb{Q}_p^n)$ and 
\begin{eqnarray}
\begin{aligned}[b]\label{E3}C\int_{0}^{\infty}\psi^{q^\prime}(t)t^{(q^%
\prime-1)(n+\alpha)-\beta q^\prime-1}\max(1, t^{-\delta
q^\prime})dt=K_{3,q},\end{aligned}
\end{eqnarray}
then 
\begin{eqnarray}
\begin{aligned}[b]\label{nn1}\|H_{\Phi,\beta}^{b}f\|_{L^{r,
\infty}(|\mathbf{x}|_{p}^{\alpha}, \mathbb{Q}_p^n)}\leq
K_{4}\|f\|_{L^{q}(|\mathbf{x}|_{p}^{\alpha}, \mathbb{Q}_p^n)}, \end{aligned}
\end{eqnarray}
where $K_{4}=K_{3,q}\bigg (\frac{1-p^{-n}}{1-p^{-(n+\gamma)}}\bigg)%
^{1/r}(1-p^{-n})\|b\|_{\Lambda_{\delta}(\mathbb{Q}_p^n)}.$
\end{theorem}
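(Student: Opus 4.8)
The plan is to mimic the proof of Theorem \ref{T1}, but to first control the commutator pointwise by the Lipschitz seminorm of $b$. Write out
\begin{eqnarray*}
\begin{aligned}
H_{\Phi,\beta}^{b}f(\mathbf{x})
=b(\mathbf{x})H_{\Phi,\beta}f(\mathbf{x})-H_{\Phi,\beta}(bf)(\mathbf{x})
=\int_{\mathbb{Q}_p^n}\frac{\Phi(\mathbf{x}|\mathbf{y}|_p)}{|\mathbf{y}|_{p}^{n-\beta}}\big(b(\mathbf{x})-b(\mathbf{y})\big)f(\mathbf{y})\,d\mathbf{y},
\end{aligned}
\end{eqnarray*}
so that $|b(\mathbf{x})-b(\mathbf{y})|\leq \|b\|_{\Lambda_{\delta}(\mathbb{Q}_p^n)}|\mathbf{x}-\mathbf{y}|_{p}^{\delta}$. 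Because $\Phi$ is radial and (as in the earlier proofs) $|\mathbf{y}|_p$ runs over the spheres $S_k$, I will split the $\mathbf{y}$-integral into the two non-Archimedean regimes $|\mathbf{y}|_p\le|\mathbf{x}|_p$ and $|\mathbf{y}|_p>|\mathbf{x}|_p$; the ultrametric inequality gives $|\mathbf{x}-\mathbf{y}|_p\le\max(|\mathbf{x}|_p,|\mathbf{y}|_p)$, which is why the factor $\max(1,t^{-\delta q'})$ appears in \eqref{E3} after rescaling by $|\mathbf{x}|_p$.

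Next I would insert the weight exactly as before, writing $|b(\mathbf{x})-b(\mathbf{y})|\,|f(\mathbf{y})|=|b(\mathbf{x})-b(\mathbf{y})|\,w(\mathbf{y})^{-1/q}\cdot |f(\mathbf{y})|w(\mathbf{y})^{1/q}$ with $w(\mathbf{y})=|\mathbf{y}|_p^{\alpha}$, and applying H\"older's inequality with exponents $q',q$. This reduces the estimate to bounding
\begin{eqnarray*}
\begin{aligned}
\Big(\int_{\mathbb{Q}_p^n}\Big|\frac{\Phi(\mathbf{x}|\mathbf{y}|_p)}{|\mathbf{y}|_{p}^{n-\beta+\alpha/q}}\Big|^{q'}|\mathbf{x}-\mathbf{y}|_p^{\delta q'}\,d\mathbf{y}\Big)^{1/q'}
\leq C\,\|b\|_{\Lambda_{\delta}}(1-p^{-n})^{1/q'}K_{3,q}^{1/q'}\,|\mathbf{x}|_p^{-(n+\gamma)/r},
\end{aligned}
\end{eqnarray*}
where the key computation is again the sum over $k\in\mathbb{Z}$ with $|\mathbf{x}|_p=p^l$: on $S_k$ one has $\Phi(\mathbf{x}|\mathbf{y}|_p)=\psi(p^{l-k})$ and $|\mathbf{y}|_p=p^k$, so the radial sum becomes a geometric-type series in $p^{l-k}$ which, after substituting $t=p^{l-k}$, produces precisely the integral in \eqref{E3}; the new balance condition $(\beta+\delta)-\frac{n+\alpha}{q}=-\frac{n+\gamma}{r}$ is exactly what makes the power of $|\mathbf{x}|_p$ collapse to $-(n+\gamma)/r$. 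From the resulting pointwise bound $|H_{\Phi,\beta}^{b}f(\mathbf{x})|\le C_1|\mathbf{x}|_p^{-(n+\gamma)/r}$ with $C_1=K_{3,q}\big(\ldots\big)\|b\|_{\Lambda_\delta}\|f\|_{L^q(|\mathbf{x}|_p^\alpha)}$, the weak-$(q,r)$ conclusion \eqref{nn1} follows verbatim as in Theorem \ref{T1}: the superlevel set $\{|H_{\Phi,\beta}^{b}f|>\lambda\}$ is contained in a ball $\{|\mathbf{x}|_p\le(C_1/\lambda)^{r/(n+\gamma)}\}$, and integrating $|\mathbf{x}|_p^\gamma$ over that ball (a convergent geometric series since $\gamma>-n$) yields the factor $\big(\frac{1-p^{-n}}{1-p^{-(n+\gamma)}}\big)^{1/r}$ and hence $K_4$.

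The main obstacle I anticipate is handling the extra factor $|\mathbf{x}-\mathbf{y}|_p^{\delta q'}$ inside the radial integral: unlike the clean computation in Theorem \ref{T1}, here one must use the ultrametric structure to replace $|\mathbf{x}-\mathbf{y}|_p$ by $\max(|\mathbf{x}|_p,|\mathbf{y}|_p)$ on each sphere $S_k$ (noting that on $S_k$ with $k\neq l$ one has equality $|\mathbf{x}-\mathbf{y}|_p=\max(p^l,p^k)$, while the single sphere $k=l$ contributes only $|\mathbf{x}-\mathbf{y}|_p\le p^l$), and then check that the two regimes combine into the single quantity $\max(1,t^{-\delta q'})$ under the substitution $t=p^{l-k}$. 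One also must verify that the series converges at both ends, which is where finiteness of \eqref{E3} (the $\max(1,t^{-\delta q'})$ truncation near $t=0$) is used; this is the role of the hypothesis and should be spelled out. Everything else is a routine repetition of the weighted H\"older argument already carried out in Theorem \ref{T1}.
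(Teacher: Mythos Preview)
Your plan is correct and follows the same overall architecture as the paper: Lipschitz control of $b(\mathbf{x})-b(\mathbf{y})$, H\"older with the weight $|\mathbf{y}|_p^{\alpha}$, reduction to a radial sum over spheres $S_k$, identification of the integral $K_{3,q}$, and then the same level-set/ball computation as in Theorem~\ref{T1} to get the weak-$(q,r)$ bound.

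The one place where you diverge from the paper is the decomposition of the Lipschitz increment. You bound $|b(\mathbf{x})-b(\mathbf{y})|\le\|b\|_{\Lambda_\delta}|\mathbf{x}-\mathbf{y}|_p^{\delta}$ and then invoke the ultrametric inequality $|\mathbf{x}-\mathbf{y}|_p\le\max(|\mathbf{x}|_p,|\mathbf{y}|_p)$ sphere by sphere, which after the substitution $t=p^{l-k}$ produces the factor $\max(1,t^{-\delta q'})$ in one stroke. The paper instead inserts the origin, writing $b(\mathbf{x})-b(\mathbf{y})=(b(\mathbf{x})-b(\mathbf{0}))-(b(\mathbf{y})-b(\mathbf{0}))$, and bounds the two pieces separately by $\|b\|_{\Lambda_\delta}|\mathbf{x}|_p^{\delta}$ and $\|b\|_{\Lambda_\delta}|\mathbf{y}|_p^{\delta}$; this yields two integrals $I_1,I_2$ whose kernels carry the exponents $t^{(q'-1)(n+\alpha)-\beta q'-1}$ and $t^{(q'-1)(n+\alpha)-(\beta+\delta)q'-1}$ respectively, and both are then dominated by the single integral defining $K_{3,q}$. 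Your route is slightly cleaner (it explains transparently why the $\max$ appears in \eqref{E3} and avoids the artificial splitting through $\mathbf{0}$), while the paper's route sidesteps the casework on $k\lessgtr l$; up to harmless constants the two are equivalent, and the endgame is identical.
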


\begin{proof}
We first take: 
\begin{eqnarray*}
\begin{aligned}H_{\Phi,\beta}f(\mathbf{x})=\int_{\mathbb{Q}_p^n}\frac{\Phi(%
\mathbf{x}|\mathbf{y}|_p)}{|\mathbf{y}|_{p}^{n-\beta+\alpha/q}}f(%
\mathbf{y})|\mathbf{y}|_{p}^{\alpha/q}d\mathbf{y} \end{aligned}
\end{eqnarray*}
By definition of Lipschitz space, we have: 
\begin{eqnarray*}
\begin{aligned}|H_{\Phi,\beta}^{b}(f)(\mathbf{x})|\leq&\bigg|\int_{%
\mathbb{Q}_p^n}\frac{\Phi(\mathbf{x}|\mathbf{y}|_{p})}{|\mathbf{y}|_{p}^{n-%
\beta+\alpha/q}}(b(\mathbf{x})-b(\mathbf{0}))f(\mathbf{y})|\mathbf{y}|_{p}^{%
\alpha/q}d\mathbf{y}\bigg|\\\quad&+\bigg|\int_{\mathbb{Q}_p^n}\frac{\Phi(%
\mathbf{x}|\mathbf{y}|_{p})}{|\mathbf{y}|_{p}^{n-\beta+\alpha/q}}(b(%
\mathbf{y})-b(\mathbf{0}))f(\mathbf{y})|\mathbf{y}|_{p}^{\alpha/q}d%
\mathbf{y}\bigg|\\
\leq&\|b\|_{\Lambda_{\delta}(\mathbb{Q}_p^n)}|\mathbf{x}|_{p}^{\delta}\int_{%
\mathbb{Q}_p^n}\frac{\Phi(\mathbf{x}|\mathbf{y}|_{p})}{|\mathbf{y}|_{p}^{n-%
\beta+\alpha/q}}f(\mathbf{y})|\mathbf{y}|_{p}^{\alpha/q}d\mathbf{y}\\\quad&+%
\|b\|_{\Lambda_{\delta}(\mathbb{Q}_p^n)}\int_{\mathbb{Q}_p^n}\frac{\Phi(%
\mathbf{x}|\mathbf{y}|_{p})}{|\mathbf{y}|_{p}^{n-\beta+\alpha/q-\delta}}f(%
\mathbf{y})|\mathbf{y}|_{p}^{\alpha/q}d\mathbf{y}\\ =&I_{1}+I_{2}.
\end{aligned}
\end{eqnarray*}
We evaluate $I_{2}$ first. By H\"{o}lder's inequality, we get: 
\begin{eqnarray}
\begin{aligned}[b]\label{E2}I_{2}\leq&\|b\|_{\Lambda_{\delta}(%
\mathbb{Q}_p^n)}\bigg\{\int_{\mathbb{Q}_p^n}\bigg|\frac{\Phi(\mathbf{x}|%
\mathbf{y}|_p)}{|\mathbf{y}|_{p}^{n-\beta+\alpha/q-\delta}}\bigg|^{q^%
\prime}d\mathbf{y}\bigg\}^{1/q^\prime}\bigg\{\int_{\mathbb{Q}_p^n}|f(%
\mathbf{y})|^{q}|\mathbf{y}|_{p}^{\alpha}d\mathbf{y}\bigg\}^{1/q}\\
=&\|b\|_{\Lambda_{\delta}(\mathbb{Q}_p^n)}\bigg\{\int_{\mathbb{Q}_p^n}\bigg|%
\frac{\Phi(\mathbf{x}|\mathbf{y}|_p)}{|\mathbf{y}|_{p}^{n-\beta+\alpha/q-%
\delta}}\bigg|^{q^\prime}d\mathbf{y}\bigg\}^{1/q^\prime}\|f\|_{L^{q}(|%
\mathbf{x}|_{p}^{\alpha},\mathbb{Q}_p^n)}. \end{aligned}
\end{eqnarray}
If $|\mathbf{x}|_{p}=p^{l}, l\in\mathbb{Z},$ then repeating the same process
as in Theorem (\ref{T1}), we arrive at: 
\begin{eqnarray*}
\begin{aligned}\int_{\mathbb{Q}_p^n}\frac{|\Phi(\mathbf{x}|%
\mathbf{y}|_p)|^{q^\prime}}{|\mathbf{y}|_{p}^{(n-\beta+\alpha/q-\delta)q^%
\prime}}d\mathbf{y}=&\sum_{k\in\mathbb{Z}}\int_{S_{k}}\frac{|%
\psi(p^{l-k})|^{q^\prime}}{p^{k(n-\beta+\alpha/q-\delta)q^\prime}}d%
\mathbf{y}\\ =&(1-p^{-n})
\sum_{k\in\mathbb{Z}}\psi^{q^\prime}(p^{l-k})p^{(l-k)((q^\prime-1)(n+%
\alpha)-(\beta+\delta)q^\prime-1+1)}\\\quad&\times|\mathbf{x}|_{p}^{-((n+%
\alpha)(q'-1)-(\beta+\delta)q')}\\
\leq&C(1-p^{-n})\int_{0}^{\infty}\psi^{q^\prime}(t)t^{(q^\prime-1)(n+%
\alpha)-(\beta+\delta)q^\prime-1}dt|\mathbf{x}|_{p}^{-((n+\alpha)(q'-1)-(%
\beta+\delta)q')}. \end{aligned}
\end{eqnarray*}
Making use of above value, (\ref{E2}) becomes: 
\begin{eqnarray*}
\begin{aligned}I_{2}\leq C
(1-p^{-n})\|b\|_{\Lambda_{\delta}(\mathbb{Q}_p^n)}|\mathbf{x}|_{p}^{-(n+%
\alpha)/q+(\beta+\delta)}\bigg(\int_{0}^{\infty}\psi^{q^\prime}(t)t^{(q^%
\prime-1)(n+\alpha)-(\beta+\delta)q^\prime-1}dt\bigg)^{1/q^\prime}\|f%
\|_{L^{q}(|\mathbf{x}|_{p}^{\alpha},\mathbb{Q}_p^n)}. \end{aligned}
\end{eqnarray*}
By inserting $|\mathbf{y}|_{p}^{n-\beta+\alpha/q}$ instead of $|\mathbf{y}%
|_{p}^{n-\beta+\alpha/q-\delta}$, we get: 
\begin{eqnarray*}
\begin{aligned}I_{1}\leq C
(1-p^{-n})\|b\|_{\Lambda_{\delta}(\mathbb{Q}_p^n)}|\mathbf{x}|_{p}^{-(n+%
\alpha)/q+(\beta+\delta)}\bigg(\int_{0}^{\infty}\psi^{q^\prime}(t)t^{(q^%
\prime-1)(n+\alpha)-\beta
q^\prime-1}dt\bigg)^{1/q^\prime}\|f\|_{L^{q}(|\mathbf{x}|_{p}^{\alpha},%
\mathbb{Q}_p^n)}. \end{aligned}
\end{eqnarray*}
Since $(\beta+\delta)-\frac{n+\alpha}{q}=-\frac{n+\gamma}{r},$ and by (\ref%
{E3}), we obtain: 
\begin{eqnarray*}
\begin{aligned}|H_{\Phi,\beta}^{b}(f)(\mathbf{x})|\leq K_{3,q}
(1-p^{-n})\|b\|_{\Lambda_{\delta}(\mathbb{Q}_p^n)}|\mathbf{x}|_{p}^{-(n+%
\gamma)/r}\|f\|_{L^{q}(|\mathbf{x}|_{p}^{\alpha},\mathbb{Q}_p^n)}
\end{aligned}
\end{eqnarray*}
Let $C_{3}=K_{3,q}(1-p^{-n})\|b\|_{\Lambda_{\delta}(\mathbb{Q}%
_p^n)}\|f\|_{L^{q}(|\mathbf{x}|_{p}^{\alpha},\mathbb{Q}_p^n)},$ then for all 
$\lambda>0,$ we have: 
\begin{eqnarray*}
\begin{aligned}\{\mathbf{x}\in\mathbb{Q}_p^n:|H_{\Phi,\beta}^{b}f(%
\mathbf{x})|>\lambda\}\subset\{\mathbf{x}\in\mathbb{Q}_p^n:|\mathbf{x}|_{p}%
\leq(C_{3}/\lambda)^{r/n+\gamma}\}. \end{aligned}
\end{eqnarray*}
Ultimately, 
\begin{eqnarray*}
\begin{aligned}\|H_{\Phi,\beta}^{b}f(\mathbf{x})\|_{L^{r,\infty}(|%
\mathbf{x}|_{p}^{\gamma},\mathbb{Q}_p^n)}\leq&\sup_{\lambda>0}\lambda\bigg(%
\int_{\mathbb{Q}_p^n}\chi_{\bigg\{\mathbf{x}\in\mathbb{Q}_p^n:|%
\mathbf{x}|_{p}<(C_{3}/\lambda)^{r/n+\gamma}\bigg\}}(\mathbf{x})|%
\mathbf{x}|_{p}^{\gamma}d\mathbf{x}\bigg)^{1/r}\\
=&\sup_{\lambda>0}\lambda\bigg(\int_{|\mathbf{x}|_{p}<(C_{3}/\lambda)^{r/n+%
\gamma}}|\mathbf{x}|_{p}^{\gamma}d\mathbf{x}\bigg)^{1/r}\\=&\bigg(%
\frac{1-p^{-n}}{1-p^{-(n+\gamma)}}\bigg)^{1/r}C_{3}\\ =&K_{3,q}\bigg
(\frac{1-p^{-n}}{1-p^{-(n+\gamma)}}\bigg)^{1/r}(1-p^{-n})\|b\|_{\Lambda_{%
\delta}}\|f\|_{L^{q}(|\mathbf{x}|_{p}^{\alpha},\mathbb{Q}_p^n)}\\
=&K_{4}\|f\|_{L^{q}(|\mathbf{x}|_{p}^{\alpha},\mathbb{Q}_p^n)}. \end{aligned}
\end{eqnarray*}
\end{proof}

Next, we will show that the strong type estimates also hold for $%
H_{\Phi,\beta}^{b}.$

\begin{theorem}
Let $1<q<r<\infty,$ $0<\delta<1.$ Let also $\min\{\alpha,\gamma\}>-n,$ $%
(\beta+\delta)-\frac{n+\alpha}{q}=-\frac{n+\gamma}{r},$ $w(\mathbf{x})=|%
\mathbf{x}|_{p}^{\alpha}, \alpha>-n.$ If $\Phi$ is radial function, $%
b\in\Lambda_{\delta}(\mathbb{Q}_p^n)$ and equation (\ref{E3}) is true for $%
q\pm\epsilon$ instead of $q$ where $0\leq\epsilon<\epsilon_{0},$ then 
\begin{eqnarray*}
\begin{aligned}\|H^{b}_{\Phi,\beta}f\|_{L^{r,s}(|\mathbf{x}|_{p}^{\alpha},%
\mathbb{Q}_p^n)}\preceq\|f\|_{L^{q,s}(|\mathbf{x}|_{p}^{\alpha},%
\mathbb{Q}_p^n)}. \end{aligned}
\end{eqnarray*}
\end{theorem}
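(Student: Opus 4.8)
The plan is to obtain the strong-type estimate for $H^{b}_{\Phi,\beta}$ by interpolating between two weak-type estimates, exactly as in the proof of the previous strong-type theorem for $H_{\Phi,\beta}$. First I would observe that since $1<q<r<\infty$ and the hypothesis (\ref{E3}) is assumed to hold with $q$ replaced by $q\pm\epsilon$ for all $0\le\epsilon<\epsilon_0$, we may fix $\epsilon\in(0,\epsilon_0)$ small enough that $1<q-\epsilon$ and $1<r-\epsilon$, and also small enough that the two scaling balance conditions can be arranged. The key point is that the exponent relation in Theorem \ref{T2} couples $q$ with $r$; to apply Theorem \ref{T2} at the pair $(q-\epsilon,r-\epsilon)$ we need $(\beta+\delta)-\frac{n+\alpha}{q-\epsilon}=-\frac{n+\gamma_{-}}{r-\epsilon}$ for a suitable companion parameter $\gamma_{-}$ in the target weight, and similarly for $(q+\epsilon,r+\epsilon)$ with some $\gamma_{+}$; I would define $\gamma_{\pm}$ by these equations, noting that as $\epsilon\to 0$ both $\gamma_{\pm}\to\alpha$, so by choosing $\epsilon_0$ small we keep $\min\{\alpha,\gamma_{\pm}\}>-n$. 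This is the one place where a little care is needed, and I expect it to be the main (though modest) obstacle: matching the single target weight $|\mathbf{x}|_p^{\alpha}$ in the conclusion with the weights $|\mathbf{x}|_p^{\gamma_{\pm}}$ produced by Theorem \ref{T2}.

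Granting the choice of $\epsilon$ and $\gamma_{\pm}$, I would then invoke Theorem \ref{T2} twice to conclude that $H^{b}_{\Phi,\beta}$ is of weak type $(q-\epsilon,r-\epsilon)$ (with target weight $|\mathbf{x}|_p^{\gamma_{-}}$) and of weak type $(q+\epsilon,r+\epsilon)$ (with target weight $|\mathbf{x}|_p^{\gamma_{+}}$); the finiteness of the constants $K_{4}$ in each case is exactly the content of the hypothesis that (\ref{E3}) holds for $q\pm\epsilon$. Next I would set the interpolation parameter $\vartheta=1/2$, so that
\begin{equation*}
\frac{1}{q}=\frac{1-\vartheta}{q-\epsilon}+\frac{\vartheta}{q+\epsilon}+o(1),\qquad \frac{1}{r}=\frac{1-\vartheta}{r-\epsilon}+\frac{\vartheta}{r+\epsilon}+o(1),
\end{equation*}
and more precisely I would pick the perturbation so the Marcinkiewicz exponents land exactly on $q$ and $r$ (this is a routine solving of two linear equations in the two unknowns, which pins down $\epsilon$ together with $\gamma_{\pm}$ via the scaling relations). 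Finally I would apply Theorem \ref{T^} with $q'=q-\epsilon$, $q_0=q+\epsilon$, $r'=r-\epsilon$, $r_0=r+\epsilon$, $\alpha$ the common weight exponent, and the given $s$ with $1\le s<\infty$, to obtain
\begin{equation*}
\|H^{b}_{\Phi,\beta}f\|_{L^{r,s}(|\mathbf{x}|_{p}^{\alpha},\mathbb{Q}_p^n)}\preceq\|f\|_{L^{q,s}(|\mathbf{x}|_{p}^{\alpha},\mathbb{Q}_p^n)},
\end{equation*}
which is the desired conclusion. In short: reduce to two endpoint weak-type bounds via Theorem \ref{T2}, then interpolate with the Marcinkiewicz theorem (Theorem \ref{T^}); the only delicate bookkeeping is the simultaneous choice of $\epsilon$ and the auxiliary target-weight exponents so that all hypotheses of Theorems \ref{T2} and \ref{T^} are met and the interpolated indices are exactly $(q,r)$.
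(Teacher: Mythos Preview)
Your overall strategy—two applications of Theorem~\ref{T2} followed by Marcinkiewicz interpolation—is exactly what the paper does, but your bookkeeping introduces a genuine obstruction that the paper avoids. You perturb \emph{both} exponents to $q\pm\epsilon$ and $r\pm\epsilon$, and then absorb the resulting mismatch in the scaling relation by letting the \emph{target weight exponent} vary to $\gamma_{\pm}$. The difficulty is that Theorem~\ref{T^}, as stated, is a single-weight Marcinkiewicz theorem: the two endpoint weak-type bounds must be with respect to the \emph{same} measure $|\mathbf{x}|_p^{\alpha}\,d\mathbf{x}$. Your endpoint bounds live in $L^{r\mp\epsilon,\infty}(|\mathbf{x}|_p^{\gamma_\mp})$ with $\gamma_-\neq\gamma_+$, so Theorem~\ref{T^} does not apply; you would need an interpolation-with-change-of-measures theorem (of Stein--Weiss type), which the paper has not provided. (Incidentally, $\gamma_{\pm}\to\gamma$ as $\epsilon\to 0$, not $\alpha$.) Your remark about choosing $\vartheta$ and $\epsilon$ so that the interpolated indices land exactly on $(q,r)$ also does not quite close: with the rigid choice $r'=r-\epsilon$, $r_0=r+\epsilon$ you have only two free parameters for two nonlinear constraints plus the weight matching, and the weight matching is the one you cannot satisfy.

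The paper's remedy is simple and worth noting: perturb only $q$ to $q_1=q-\epsilon$, $q_2=q+\epsilon$, keep the weights $|\mathbf{x}|_p^{\alpha}$ and $|\mathbf{x}|_p^{\gamma}$ fixed, and let $r_1,r_2$ be \emph{determined} by the scaling relation $(\beta+\delta)-\frac{n+\alpha}{q_i}=-\frac{n+\gamma}{r_i}$. Because this relation is affine between $1/q$ and $1/r$, the same $\vartheta$ that gives $1/q=(1-\vartheta)/q_1+\vartheta/q_2$ automatically gives $1/r=(1-\vartheta)/r_1+\vartheta/r_2$; then Theorem~\ref{T^} applies directly with a single weight, and the proof is complete. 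If you rewrite your argument with this choice of $r_i$ (instead of $r\pm\epsilon$ and varying $\gamma_\pm$), it becomes correct and coincides with the paper's proof.
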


\begin{proof}
Since $q, r \in(1, \infty),$ so $q\in(1,n/\delta),$ we can find $%
0\leq\epsilon<\epsilon_{0}$ such that $q_{1}=q-\epsilon\in(1,n/\delta)$ and $%
q_{2}=q+\epsilon\in(1,n/\delta).$ Also, we can choose $r_{1}$ and $r_{2}$
such that $r_{1}<r<r_{2}$ which satisfies 
\begin{equation*}
(\beta+\delta)-\frac{n+\alpha}{q_{i}}=-\frac{n+\gamma}{r_{i}}, i=1,2.
\end{equation*}
Using Theorem \ref{T2}, we have: 
\begin{eqnarray*}
\begin{aligned}[b]\label{nn}\|H_{\Phi,\beta}^{b}f\|_{L^{r_{i},
\infty}(|\mathbf{x}|_{p}^{\gamma},
\mathbb{Q}_p^n)}\preceq\|f\|_{L^{q_{i}}(|\mathbf{x}|_{p}^{\alpha},
\mathbb{Q}_p^n)}. \end{aligned}
\end{eqnarray*}
But the equality $1/q=\vartheta/q_{1}+(1-\vartheta)/q_{2}$ implies a similar
equality $1/r=\vartheta/r_{1}+(1-\vartheta)/r_{2}.$ Required result is
obtained by using Theorem \ref{T^}.
\end{proof}


\begin{thebibliography}{99}
\bibitem{A} Andersen KF. Boundedness of Hausdorff Operator on $L^{p}(\mathbb{%
R}^{n}),$ $H^{1}(\mathbb{R}^{n})$ and $BMO (\mathbb{R}^{n}).$ Acta Sci Math
(Szeged), 2003; \textbf{69}:  409-418.

\bibitem{ADFV} Arefeva IY, Dragovich B, Frampton P, Volovich IV. The wave
function of the universe and $p$-Adic gravity. Mod. Phys. Lett A, 1991; 
\textbf{6}: 4341-4358.

\bibitem{ABKO1} Avestisov AV, Bikulov AH, Kozyrev SV, Osipov VA. Application
of $p$-Adic analysis to models of spontaneous breaking of replica symmetry.
J. Phys. A: Math. Gen. 1999; \textbf{32}: 8785-8791.

\bibitem{ABKO} Avestisov AV, Bikulov AH, Kozyrev SV, Osipov VA. $p$-adic
models of ultrametric diffusion constrained by hierarchical energy
landscape. J. Phys. A: Math. Gen. 2002; \textbf{35}: 177-189.

\bibitem{ABO} Avestisov AV, Bikulov AH, Osipov VA. $p$-adic description of
Characterization relaxation in complex system. J. Phys. A: Math. Gen. 2003; 
\textbf{36}: 4239-4246.

\bibitem{DGSK} Dubischar D, Gundlach VM, Steinkamp O, Khrennikov A. A $p$%
-Adic model for the process of thinking disturbed by physiological and
information noise, J. Theor. Biol. 1999; \textbf{197(4)}: 451-467.

\bibitem{FZ} Fan D, Zhao F. Sharp constants for multivariate Hausdorff $q$%
-inequalities. J. Aust. Math. Soc. 2018; doi:10.1017/S1446788718000113.

\bibitem{GHZ} Gao F, Hu X, Zhong C. Sharp weak estimates for Hardy-type
Operators, Ann. Funct. Anal. 2016; \textbf{7(3)}: 421--433.

\bibitem{GZ} Gao G, Zhao F. Sharp weak bounds for Hausdorff operators. Anal.
Math. 2015; \textbf{41}: 163--173.

\bibitem{HJ} Haixia Y, Junferg L. Sharp weak estimates for $n$-dimensional
fractional Hardy Operators. Front. Math. China. 2018; \textbf{13(2)}:
449--457.

\bibitem{SH} Haran S. Analytic potential theory over the $p$- adics. Ann.
Inst. Fourier (Grenoble). 1993; \textbf{43}: 905-944.

\bibitem{SR} Haran S. Riesz potential and explicit sums in arithemtic.
Invent. Math. 1990; \textbf{101}: 697-703.

\bibitem{K} Kochubei AN. Stochastic integrals and stochastic differential
equations over the field of $p$-Adic numbers. Potential Analysis. 1997; 
\textbf{6}: 105-125.

\bibitem{SVK} Kozyrev SV. Methods and applications of ultrametric and $p$%
-adic analysis: From wavelet theory to biophysics, Proc. Steklov. Inst.
Math. 2011; \textbf{274}: 1-84.

\bibitem{LL} Lerner AK, Liflyand E. Multidimensional Hausdorff operators on
the real Hardy Space. Acta. Sci. Math(Szeged). 2007; \textbf{83}: 79-86.

\bibitem{LS} Lin X, Sun L. Some estimates on the Hausdorff operator. Acta
Sci. Math. (Szeged). 2012; \textbf{78}: 669-681.

\bibitem{PS} Parisi G, Sourlas N. $p$-Adic numbers and replica symmetry
break. Eur. J. Phy. B. 2000; \textbf{14}: 535-542.

\bibitem{VV} Vladimirov VS, Volovich IV. $p$-Adic quantum mechanics. Commun.
Math. Phys. 1989; \textbf{123}: 659-676.

\bibitem{VVZ} Vladimirov IV, Volovich IV, Zelenov EI. $p$-Adic Analysis and
Mathematical Physics. World Scientific, Singapore. 1994.

\bibitem{WF1} Wu QY, Fu ZW. Hardy-Littlewood-Sobolev Inequalities on $p$%
-adic Central Morrey Spaces. Journal of Function Spaces Volume 2015; Article
ID 419532: 7 pages; http://dx.doi.org/10.1155/2015/419532.

\bibitem{WF} Wu QY, Fu ZW. Weighted $p$-Adic Hardy operators and their
commutators on $p$-Adic central Morrey spaces. Malays. Math. Sci. Soc. 2015; 
\textbf{40}: 635-654.
\end{thebibliography}
\end{document}